\tikzset{>=latex}
\newcommand{\R}{\mathbb R}
\newcommand{\norm}[1]{\left\lVert#1\right\rVert}
\newcommand{\disp}{\displaystyle}
\newcommand{\s}{\mathnormal{S}}
\newcommand{\I}{\mathnormal{I}}
\newcommand{\szero}{\s_{0}}
\newcommand{\izero}{\I_{0}}
\newcommand{\snc}{\mathcal{S}}
\newcommand{\Inc}{\mathcal{I}}
\newcommand{\szeronc}{\snc_{0}}
\newcommand{\izeronc}{\Inc_{0}}
\newcommand{\diffnc}{\mathcal{Z}}
\newcommand{\sommanc}{\mathcal{N}}
\newcommand{\nzeronc}{\sommanc_{0}}
\newcommand{\sis}{\textup{SIS}}
\newcommand\drl[1]{\mathcal{D}_{t}^{#1}}
\newcommand\dcap[1]{\mathscr{D}_{t}^{#1}}
\newcommand\rli[1]{\mathcal{I}^{#1}}
\newcommand{\sig}{\frac{\sigma-1}{\sigma}}
\newcommand\ek[1]{E^{\alpha}_{#1}}
\newcommand\ak[1]{A^{\alpha}_{#1}}
\newcommand{\dt}{\Delta t}
\newcommand{\nt}{N}
\newcommand{\tzero}{t_{0}}
\newcommand{\tn}{t_{n+1}}
\newcommand{\ralph}{r_{\alpha}}
\newcommand{\molt}{b}
\newcommand{\numuno}{M_{1}}
\newcommand{\numdue}{M_{2}}
\titleformat*{\section}{\Large\bfseries}
\titleformat*{\subsection}{\large\bfseries}
\let\oldparagraph=\paragraph
\renewcommand\paragraph[1]{\oldparagraph{#1.}}
\numberwithin{equation}{section}
\theoremstyle{definition}
\newtheorem{remark}{Remark}
\newtheorem{theorem}{Theorem}
\newtheorem{prop}{Proposition}
\numberwithin{remark}{section}
\numberwithin{theorem}{section}
\numberwithin{prop}{section}
\numberwithin{defn}{section}
\title{\LARGE\textbf{Fractional $\sis$ epidemic models}}
\author{\normalsize{Caterina Balzotti}\thanks{Department of Basic and Applied Sciences for Engineering, Sapienza University of Rome, Via A. Scarpa 16, Rome, Italy (\href{mailto:caterina.balzotti@sbai.uniroma1.it}{caterina.balzotti@sbai.uniroma1.it}, \href{paola.loreti@uniroma1.it }{paola.loreti@uniroma1.it })}
\and \normalsize{Mirko D'Ovidio}\thanks{Department of Basic and Applied Sciences for Engineering, Sapienza University of Rome, Via A. Scarpa 10, Rome, Italy (\href{mailto:mirko.dovidio@uniroma1.it}{mirko.dovidio@uniroma1.it}): corresponding author.
}
\and \normalsize{Paola Loreti}\footnotemark[1]}
\date{\vspace{-0.5cm}}
\begin{document}

\maketitle

\begin{abstract}
In this paper we consider the fractional $\sis$ epidemic model ($\alpha$-$\sis$ model) in the case of constant population size. We provide a representation of the explicit solution to the fractional model and we illustrate  the results by numerical schemes. A comparison with the limit case when the fractional order $\alpha \uparrow 1$ (the $\sis$ model) is also given. We analyse the effects of the fractional derivatives by comparing the $\sis$ and the $\alpha$-$\sis$ models. 
\end{abstract}

\begin{description}
\item[\textbf{Keywords.}] $\alpha$-$\sis$ model, $\sis$ model, epidemic models, fractional logistic equation.
\item[\textbf{Mathematics Subject Classification.}]  92D30, 78A70, 26A33.
\end{description}

\section{Introduction}
The study of mathematical models for epidemiology has a long history, dating back to the early 1900s with the theory developed by Kermack and McKendrick \cite{kermack1927RSL}. Such theory describes compartmental models, where the population is divided into groups depending on the state of individuals with respect to disease, distinguishing between groups. The dynamic of the disease is then described by a system of ordinary differential equations for each class of individuals.
The use of mathematical models for epidemiology is particularly useful to predict the progress of an infection and to take strategy to limit the spread of the disease.
In this work we focus on the $\alpha$-$\sis$ (susceptible - infectious - susceptible) epidemiological model. The $\sis$ model has a long history too \cite{hethcote1989AME}. It  describes  the spread of  human viruses such as influenza. The $\sis$ model with constant population is particularly appropriate to describe some bacterial agent diseases such as gonorrhea, meningitis and streptococcal sore throat. $\sis$ is a model without immunity, where the individual recovered from the infection comes back into the class of susceptibles.

\subsection{Statement of the problem}
We propose an $\alpha$-$\sis$ model with constant population size. 
The novelty  concerns the $\sis$ equations with the time fractional Caputo derivative in place of time standard derivative and their explicit solutions in terms of Euler's numbers and Euler's Gamma functions. 

Let us consider the Caputo fractional derivative introduced in \eqref{sec:fracCal} below. We provide an explicit representation of the solution to
\begin{equation*}
\begin{split}
&\begin{cases}
\dcap{\alpha}\s(t) = \mu-\beta \s(t)\I(t)+\gamma \I(t) -\mu \s(t)\\
\dcap{\alpha}\I(t) = \beta \s(t)\I(t)-\gamma \I(t) -\mu \I(t)
\end{cases}\\
&\text{with $\s(t)+\I(t)=N(t)$, $\s(0)=\szero$ and $\I(0)=\izero$,}
\end{split}
\end{equation*}
for the constant population case $N(t)=1$, $\forall\, t$, where $\alpha \in (0,1)$ is the order of the Caputo fractional derivative,  $\mu$ is the birth rate and the death removal rate, $\beta$ is the contact rate and $\gamma$ is the recovery removal rate. The unknown functions $S(t)$ and $I(t)$ represent the percentage of susceptible and infected people at time $t>0$ with initial data $S_0$ and $I_0$. As far as we know, although the numerical literature it is unknown a formula for the solution. By using a series representation for the solution to the fractional logistic equation we may give an explicit formula for the unknown functions $S$ and $I$. From the numerical point of view, we validate the goodness of the theoretical formulas by applying two different numerical schemes. Then, we compare the  fractional case results ($0<\alpha<1$) with the well-known standard case taking the limit $\alpha\uparrow 1$ and  we analyse the effects produced by the fractional derivatives.
 
%

\subsection{Motivations}

Let us consider an infective disease which does not confer immunity and which is transmitted through contact between people. We divide the population into two disjoint classes which evolve in time: the susceptibles and the infectives. The first class contains the individuals which are not yet infected but who can contract the disease; the second class contains the infected population which can transmit the disease. The $\sis$ model \cite{hethcote1989AME} is a simple disease model without immunity, where the individuals recovered from the infection come back into the class of susceptibles. Such a model is used to describe the dynamic of infections which do not confer a long immunity, as the cold or influenza. Fractional calculus is therefore considered in biological models to take into account macroscopic effect. The use of fractional derivatives in the model means that some global effect may produce slowdown in the process. This is verified and discussed in the validation of the model. 
%

\subsection{State of the art}
The logistic function was introduced by Pierre Francois Verhulst \cite{verhulst1838CMP} to model the  population growth. At the beginning  of the process the growth of the population is fast;
then, as saturation process begins, the growth slows, and then growth is close to be flat. The problem to give a solution of the fractional logistic equation was unsolved and several attempts have been done (see for instance \cite{attempt2, attempt1, West, according1}). Concerning the fractional $\sis$ model, some works can be listed about numerical solutions obtained by considering different methods. From the technical point of view our result take advantage of the explicit representation  by series of the solution of fractional logistic equation solved in the recent paper  \cite{dovidio2018PA}. Thanks to a fruitful formulation of the $\sis$ model we are able to adapt  the results obtained for fractional logistic equation in \cite{dovidio2018PA} and to give the solution of fractional $\sis$ model by series.

In recent years the study of epidemiological models using fractional calculus has spread widely. In \cite{pooseh2011AIP} the authors prove via numerical simulations that the proposed fractional model gives better results than the classical theory, when compared to real data. Moreover, for some diseases it is necessary to take into account the history of the system (see for example \cite{rositch2013IJC}), thus non-locality and memory become important to model real data. Indeed, fractional operators consider the entire history of the  biological process  and we are able to model non local effects often encountered in biological phenomena. 

\subsection{Main results}
We provide an explicit representation of the solution to
\begin{equation}
\begin{split}
&\begin{cases}
\dcap{\alpha}\s(t) = \mu-\beta \s(t)\I(t)+\gamma \I(t) -\mu \s(t)\\
\dcap{\alpha}\I(t) = \beta \s(t)\I(t)-\gamma \I(t) -\mu \I(t)
\end{cases}\\
&\text{with $\s(t)+\I(t)=1$, $\s(0)=\szero$ and $\I(0)=\izero$,}
\end{split}
\label{eq:sisCap}
\end{equation}
in terms of uniformly convergent series on compact sets. 

Let us introduce the \emph{basic reproduction number} \cite{anderson1981PTRS} i.e. the expected number of secondary infections produced during the period of infection, which is given  by 
\begin{equation}\label{eq:sigma}
\sigma = \frac{\beta}{\gamma+\mu},
\end{equation} 
where $\gamma+\mu$ is the infection period. Let \begin{equation} 
\label{eq:cc}
c = \disp\sig
\end{equation}
be the so-called \emph{carrying capacity} and define $\molt=\beta c$. The problem \eqref{eq:sisCap} can be solved by considering the fractional logistic equation
\begin{equation}
\label{eqMainI}
\dcap{\alpha}\I(t) = b\, I(t) \left( 1 - \frac{1}{c} I(t) \right)
\end{equation}
In the following theorems, $B(x,y)$ denotes the Beta function, $\Gamma(x)$ denotes the Euler Gamma function and $E^\alpha_k$ are the $\alpha$-Euler's number introduced in \cite{dovidio2018PA}.

\begin{theorem}
\label{thm:solFrac}
Let $\alpha\in(0,1)$, $c\neq0$ and $b^{1/\alpha}<1$. An explicit representation of the solution of the fractional $\sis$ model \eqref{eq:sisCap} with initial condition $\izero=c/2$ and $\szero=1-\izero$ is given by
\begin{align}
\I(t) &= c\sum_{k\geq0}\ek{k}\,\molt^{\alpha k}\frac{t^{\alpha k}}{\Gamma(\alpha k+1)}\label{eq:solIfle}\\
\s(t) &= 1-I(t),\label{eq:solSfle}
\end{align}
with	
\begin{align*}
\ek{0} = \frac{1}{2}, \qquad \ek{1} = \ek{0}-(\ek{0})^{2}
\end{align*}
and $ \forall k\geq1$
\begin{align*}
\displaystyle \ek{2k} = 0, \qquad \ek{2k+1} = -\frac{1}{\alpha k+1}\sum_{\substack{i,j\\ i+j=k}}\frac{\ek{i}\ek{j}}{B(\alpha i+1,\alpha j+1)}.
\end{align*} 
The series is uniformly convergent on any compact subset $K\subseteq (0,\ralph)$, where 
\begin{equation}
r_{\alpha} = \frac{1}{\molt^{1/\alpha}}\left(\frac{\Gamma(\alpha+1)\Gamma(3\alpha+1)}{\Gamma(2\alpha+1)}\right)^{\frac{1}{2\alpha}}.
\end{equation}
\end{theorem}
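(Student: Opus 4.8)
The plan is to collapse the system \eqref{eq:sisCap} onto the single scalar equation \eqref{eqMainI} and then import the series machinery for the fractional logistic equation. First I would exploit the constraint $\s(t)+\I(t)=1$: summing the two equations in \eqref{eq:sisCap} and using that the Caputo derivative annihilates constants shows the constraint is preserved in time, so the substitution $\s=1-\I$ is legitimate. Inserting it into the equation for $\I$ gives $\dcap{\alpha}\I=(\beta-\gamma-\mu)\I-\beta\I^{2}$. With $\sigma=\beta/(\gamma+\mu)$, $c=\sig$ and $\molt=\beta c$ one has $\beta-\gamma-\mu=(\sigma-1)(\gamma+\mu)=\molt$ and $\beta=\molt/c$, so the equation becomes exactly $\dcap{\alpha}\I=\molt\,\I\,(1-\I/c)$, i.e.\ \eqref{eqMainI}; then \eqref{eq:solSfle} is nothing but $\s=1-\I$. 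This reduction is the step that makes the problem tractable, and it is pure algebra (the hypothesis $c\neq0$ being exactly what allows division by $c$).

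Next I would solve \eqref{eqMainI} by the power-series ansatz of \cite{dovidio2018PA}. Setting $\I=c\,u$ normalizes the equation to $\dcap{\alpha}u=\molt\,u(1-u)$, and I would seek $u$ as a series in $t^{\alpha}$ with coefficients $\ek{k}$. Applying $\dcap{\alpha}$ term by term via $\dcap{\alpha}\,t^{\alpha k}=\tfrac{\Gamma(\alpha k+1)}{\Gamma(\alpha(k-1)+1)}t^{\alpha(k-1)}$ handles the linear part, while the quadratic term $u^{2}$ produces a Cauchy product; the essential trick is the identity $\tfrac{\Gamma(\alpha k+1)}{\Gamma(\alpha i+1)\Gamma(\alpha j+1)}=\tfrac{1}{(\alpha k+1)B(\alpha i+1,\alpha j+1)}$ for $i+j=k$, which rewrites the convolution back in the basis $t^{\alpha k}/\Gamma(\alpha k+1)$. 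Matching coefficients then yields the recursion for $\ek{k}$, with $\ek{0}=u(0)=1/2$ encoding $\izero=c/2$ and $\ek{1}=\ek{0}-\ek{0}^{2}$. The vanishing $\ek{2k}=0$ for $k\ge1$ I would establish by induction; it is the fractional counterpart of the fact that, started at half the carrying capacity, the classical ($\alpha=1$) logistic solution has $u(t)-\tfrac12$ odd in $t$.

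Finally I would prove uniform convergence on compact $K\subseteq(0,\ralph)$. Because $\molt$ enters \eqref{eqMainI} only through a rescaling of time — the Caputo operator obeys $\dcap{\alpha}[w(\lambda\,\cdot)]=\lambda^{\alpha}(\dcap{\alpha}w)(\lambda\,\cdot)$ — the whole analysis reduces to the parameter-free equation and the radius acquires the prefactor $\molt^{-1/\alpha}$ appearing in $\ralph$. For the remaining factor I would bound the recursively defined, sign-changing coefficients, proving an estimate $|\ek{k}|\le C\rho^{k}$ and feeding it into the root test applied to $\sum_k |\ek{k}|\,t^{\alpha k}/\Gamma(\alpha k+1)$; the precise gamma combination $\bigl(\Gamma(\alpha+1)\Gamma(3\alpha+1)/\Gamma(2\alpha+1)\bigr)^{1/(2\alpha)}$ comes from the dominant balance of the first non-constant coefficients $\ek{1},\ek{3}$ together with the diagonal Beta value $B(\alpha+1,\alpha+1)$ forced by the recursion. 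Once convergence on $(0,\ralph)$ is secured, term-by-term fractional differentiation is justified on compact subsets, confirming that \eqref{eq:solIfle} indeed solves \eqref{eqMainI}. The main obstacle is exactly this last step — controlling the convolution recursion tightly enough to obtain the sharp radius $\ralph$ rather than a crude lower bound, and legitimising the term-by-term operations — and it is where the results of \cite{dovidio2018PA} do the heavy lifting, the reduction and the formal recursion being otherwise routine.
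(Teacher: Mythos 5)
Your proposal is correct and takes essentially the same route as the paper: both reduce \eqref{eq:sisCap} via $\s(t)=1-\I(t)$ and the linearity of the Caputo derivative to the fractional logistic equation \eqref{eqMainI}, rescale by the carrying capacity (so that the hypothesis $\molt^{1/\alpha}<1$ becomes the condition $M=\molt^{-1/\alpha}>1$ of \cite{dovidio2018PA}), and then rely on that paper's series solution with the $\alpha$-Euler numbers and its convergence-radius estimate. The only difference is presentational: where the paper simply cites \cite{dovidio2018PA}, you sketch its internal derivation (the coefficient recursion via the Cauchy product and Beta identity, the vanishing of even coefficients by induction, the root test), while still deferring the sharp radius $\ralph$ to that reference.
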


\begin{theorem}
\label{thm:solFrac2}
Let $\alpha\in(0,1)$, $c=0$. An explicit representation of the solution of the fractional $\sis$ model \eqref{eq:sisCap} with initial condition $\izero=1/(2\beta)$ and $\szero=1-\izero$ is given by
\begin{align}
\I(t) &= \frac{1}{\beta}\sum_{k\geq0}\ak{k}\frac{t^{\alpha k}}{\Gamma(\alpha k+1)},\label{eq:solIfle2}\\
\s(t) &= 1-\I(t),\label{eq:solSfle2}
\end{align}
with	$\ak{0} =\frac{1}{2},\, \ak{1} = -(\ak{0})^{2}$ and 
\begin{equation*}
\ak{k+1} = -\frac{1}{\alpha k+1}\sum_{\substack{i,j\\ i+j=k}}\frac{\ak{i}\ak{j}}{B(\alpha i+1,\alpha j+1)} \quad\forall\, k\geq1.
\end{equation*} 
The series converges uniformly in $K \subset (0, r_\alpha)$ with $ r_\alpha \leq (1/2)^{1/\alpha}$.
\end{theorem}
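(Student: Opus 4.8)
The plan is to reduce the system \eqref{eq:sisCap} to a single scalar fractional equation and then read off the coefficients by matching powers of $t^{\alpha}$, exactly as for Theorem~\ref{thm:solFrac} but in the degenerate regime. Since here $c=\sig=0$ forces $\sigma=1$, i.e.\ $\beta=\gamma+\mu$, substituting $\s=1-\I$ into the second line of \eqref{eq:sisCap} gives
\begin{equation*}
\dcap{\alpha}\I(t)=\beta\big(1-\I(t)\big)\I(t)-(\gamma+\mu)\I(t)=-\beta\,\I(t)^{2},
\end{equation*}
which is precisely the $c\to0$ limit of the fractional logistic equation \eqref{eqMainI} (there $\molt=\beta c\to0$). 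Hence it suffices to solve $\dcap{\alpha}\I=-\beta\,\I^{2}$ with $\I(0)=\izero=1/(2\beta)$; equivalently, setting $J=\beta\I$, to solve $\dcap{\alpha}J=-J^{2}$ with $J(0)=1/2$, after which $\I=J/\beta$ and $\s=1-\I$ recover \eqref{eq:solSfle2}.

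First I would insert the ansatz $\I(t)=\tfrac1\beta\sum_{k\ge0}\ak{k}\,t^{\alpha k}/\Gamma(\alpha k+1)$ into this equation. The Caputo power rule $\dcap{\alpha}\big(t^{\alpha k}/\Gamma(\alpha k+1)\big)=t^{\alpha(k-1)}/\Gamma(\alpha(k-1)+1)$ for $k\ge1$ (and $\dcap{\alpha}$ of a constant equal to $0$) turns the left-hand side into $\tfrac1\beta\sum_{k\ge0}\ak{k+1}\,t^{\alpha k}/\Gamma(\alpha k+1)$, while the Cauchy product for $\I^{2}$ gives on the right
\begin{equation*}
-\beta\,\I(t)^{2}=-\frac1\beta\sum_{k\ge0}\left(\Gamma(\alpha k+1)\sum_{i+j=k}\frac{\ak{i}\ak{j}}{\Gamma(\alpha i+1)\Gamma(\alpha j+1)}\right)\frac{t^{\alpha k}}{\Gamma(\alpha k+1)}.
\end{equation*}
Matching the coefficient of $t^{\alpha k}/\Gamma(\alpha k+1)$ and using the Beta identity $B(\alpha i+1,\alpha j+1)=\Gamma(\alpha i+1)\Gamma(\alpha j+1)/\Gamma(\alpha k+2)$ together with $\Gamma(\alpha k+2)=(\alpha k+1)\Gamma(\alpha k+1)$ (for $i+j=k$) produces exactly the stated recurrence for $\ak{k+1}$. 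The datum fixes $\ak{0}=1/2$, and the $k=0$ case, with $B(1,1)=1$, gives $\ak{1}=-(\ak{0})^{2}$, as claimed.

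The substance of the proof, and the step I expect to be the main obstacle, is the convergence claim. I would establish an a priori bound of the form $|\ak{k}|\le \Gamma(\alpha k+1)\,\rho^{k}\,\omega_k$, with $\rho>0$ and a subexponential correction $\omega_k$ (for instance $\omega_k=(1+k)^{-\theta}$ with $\theta>1$) whose purpose is to make the weighted convolution boundary-dominated. Indeed, inserted into the recurrence such a bound controls $\sum_{i+j=k}|\ak{i}\ak{j}|/(\Gamma(\alpha i+1)\Gamma(\alpha j+1))$ by $\rho^{k}$ times a decaying factor, after which the Gamma ratio $\Gamma(\alpha k+1)/\Gamma(\alpha k+\alpha+1)\sim(\alpha k)^{-\alpha}\to0$ absorbs the residual growth and the induction closes for a suitable $\rho$. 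Dividing by $\Gamma(\alpha k+1)$ and viewing \eqref{eq:solIfle2} as a power series in $s=t^{\alpha}$, the root test then gives $\limsup_k\big(|\ak{k}|/\Gamma(\alpha k+1)\big)^{1/k}\le\rho$, hence convergence for $t^{\alpha}<1/\rho$; tracking the constants — the binding constraint comes from the lowest indices, where $\ak{0}=1/2$ enters — identifies the admissible radius and yields uniform convergence on compact $K\subset(0,\ralph)$ with $\ralph\le(1/2)^{1/\alpha}$, the bound being read off by comparison with the Mittag--Leffler majorant $\sum_{k\ge0}\rho^{k}t^{\alpha k}/\Gamma(\alpha k+1)$ via the Weierstrass $M$-test.

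Finally, uniform convergence on compacts legitimizes termwise application of $\dcap{\alpha}$, so the series \eqref{eq:solIfle2} solves $\dcap{\alpha}\I=-\beta\,\I^{2}$ with the prescribed initial value; by uniqueness of the solution of this fractional initial value problem it coincides with $\I$, and \eqref{eq:solSfle2} follows from $\s=1-\I$. The delicate ingredient throughout is the sharp control of the quadratic convolution against the Gamma weights required to reach the threshold $(1/2)^{1/\alpha}$, which is the same mechanism used for the fractional logistic equation in \cite{dovidio2018PA} and the $c=0$ counterpart of the argument behind Theorem~\ref{thm:solFrac}.
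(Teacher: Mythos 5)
Your proposal is correct and follows essentially the same route as the paper's proof: reduce \eqref{eq:sisCap} with $c=0$ to $\dcap{\alpha}u=-u^{2}$, $u=\beta\I$, $u(0)=1/2$, insert the power series ansatz, match coefficients of $t^{\alpha k}$ (your Caputo power rule plus the Beta identity $B(\alpha i+1,\alpha j+1)=\Gamma(\alpha i+1)\Gamma(\alpha j+1)/\Gamma(\alpha k+2)$ is exactly equivalent to the paper's computation via the Riemann--Liouville derivative and \eqref{eq:caprl}), and then control the quadratic convolution by an inductive coefficient bound whose root test, driven by the decay of $\Gamma(\alpha k+1)/\Gamma(\alpha(k+1)+1)$ and the value $\ak{0}=1/2$, yields the radius $(1/2)^{1/\alpha}$ and uniform convergence on compacts by a majorant comparison. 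This is the same mechanism (majorant series $\vartheta_k$ and root criterion) used in the paper, so no genuinely different ideas are involved.
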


\subsection{Outline}

The paper is organized as follows. 
In Section \ref{secSetting} we introduce the fractional $\alpha$-$\sis$ model
with  constant population size. In Section \ref{sec:proof} 
we prove the main results of the paper.
In Section \ref{sec:comparison} we validate the model using  two numerical schemes  and we provide some numerical tests also comparing
the $\alpha$-$\sis$ model with the $\sis$ one.

\section{The Settings}
\label{secSetting}
 
\subsection{The fractional derivatives}
\label{sec:fracCal}

Fractional Calculus has a long history. Starting from some works by Leibniz (1695) or Abel (1823), it has been developed up to nowadays. The literature is vast and many definitions of fractional derivatives has been given. We recall the well-known derivatives of Caputo and Riemann-Liouville given by following the definitions we will deal with throughout. The Caputo Derivative of a function $u(t)$ is written as
\begin{align}
\dcap{\alpha} u(t):= \frac{1}{\Gamma(1-\alpha)} \int_0^t \frac{u^\prime(s)}{(t-s)^\alpha}ds, \quad t>0
\end{align}
whereas, the Riemann-Liouville derivative of $u(t)$ is defined as follows
\begin{equation}\label{eq:drl}
	\drl\alpha u(t) = \frac{1}{\Gamma(1-\alpha)}\frac{d}{dt}\int_{0}^{t}\frac{u(s)}{(t-s)^{\alpha}}ds.
\end{equation}
Notice that, for $a<b$, if $u \in L^1(a,b)$ such that $u^\prime \in L^1(a,b)$ and $|u^\prime (t) | \leq t^{\gamma - 1}$ a.e.  with $\gamma >0$, then we have that for $t \in (a,b)$
\begin{align*}
\big| \dcap{\alpha} u(t) \big| \leq \frac{1}{\Gamma(1-\alpha)} \int_0^t s^{\gamma - 1} (t - s)^{1-\alpha -1} ds = \frac{B(\gamma, 1-\alpha)}{\Gamma(1-\alpha)}
\end{align*} 
where
\begin{align*}
B(\alpha, \beta) = \frac{\Gamma(\alpha)\, \Gamma(\beta)}{\Gamma(\alpha+\beta)}, \quad \alpha>0, \; \beta>0
\end{align*}
is the Beta function and $ \Gamma(\alpha) = \int_0^\infty e^{-s} s^{\alpha-1} ds$, $\alpha>0$ is the Euler's gamma function. The Caputo and the Riemann-Liouville fractional derivatives are linked by the following formula
\begin{align}\label{eq:caprl}
\dcap{\alpha} u(t) = \drl{\alpha} u(t) - \frac{t^{-\alpha}}{\Gamma(1-\alpha)} u(0) = \drl{\alpha} \big( u(t) - u(0) \big)
\end{align}
which will be useful further on. We list some useful properties of the Caputo derivative:
\begin{enumerate}[label={(P\arabic*)}]
	\item\label{p0} Let $u$ be a constant function. Then $\dcap{\alpha} u(t)=0$.
	\item\label{p1} Le $u:[a,b] \to \R$ such that $u(a)=0$ and $\dcap{\alpha} u$, $\drl{\alpha} u$ exist almost everywhere. Then, $\dcap{\alpha} u = \drl{\alpha}u$.
	\item\label{p2}  Let $u,v:[a,b] \to \R$ be such that $\dcap{\alpha} u(t)$ and $\dcap{\alpha}v(t)$ exist almost everywhere in $[a,b]$. Let $c,d \in \R$. Then, $\dcap{\alpha} (c u(t) + d v(t))$ exists almost everywhere in $[a,b]$. In particular,
\begin{align*}
\dcap{\alpha} (c u(t) + d v(t)) = c \dcap{\alpha}u(t) + d \dcap{\alpha} v(t).
\end{align*} 
	\item\label{p3} Let $u \in C^1([a,b])$. Then, 
$$\dcap{\alpha} u(t) \to u^\prime(t), \quad \textrm{as} \quad \alpha \to 1^- $$
pointwise in $(a, b]$.
\end{enumerate}
\ref{p0} and \ref{p2} are immediate consequences of the definition of the Caputo derivative. \ref{p1} can be obtained from \eqref{eq:caprl}. \ref{p3} follows from the definition given for $\alpha \in (0,1)$. Our discussion here is based on the result in \cite[Theorem 2.20]{DiethBOOK} for the Riemann-Liouville derivative and the definition \eqref{eq:caprl} above of the Caputo derivative. The interested reader can also consult \cite[page 20]{Ferrari} in which the connection with the Marchaud derivative is considered. 

Let us consider the equation 
$\mathscr{D}^\alpha_t u + a\, u=0$ on $K= [0,\infty)$ with $u(0) = 1$ where $a\in\R$. 
Then, $u$ is the Mittag-Leffler function
\begin{equation}\label{eq:mittag}
	u(t) = E_{\alpha}(-at^{\alpha}) = \sum_{k\geq0}(-a)^{k}\frac{t^{\alpha k}}{\Gamma(\alpha k +1)}, \quad t\in K.
\end{equation}
For the reader's convenience we write below the proof of this standard result. From the Laplace transform
\begin{align*}
\int_0^\infty e^{-\lambda t} \mathscr{D}^\alpha_t u(t)\, dt = \lambda^\alpha \widetilde{u}(\lambda) - \lambda^{\alpha-1}u(0)
\end{align*}
where $\widetilde{u}(\lambda) = \int_0^\infty e^{-\lambda t} u(t)dt$, the equation takes the form
$\lambda^\alpha \widetilde{u}(\lambda) - \lambda^{\alpha-1} u(0) = a\, \widetilde{u}(\lambda)$ that is
\begin{align*}
\widetilde{u}(\lambda) = u(0) \frac{\lambda^{\alpha-1}}{a + \lambda^\alpha} = \int_0^\infty e^{-\lambda t} \, E_{\alpha}(-a t^{\alpha})\, dt, \quad \lambda>0,
\end{align*}
since $u(0)=1$. From the Stirling's formula for Gamma function we have 
\begin{align*}
\left( \frac{a^k}{\Gamma(\alpha k +1)} \right)^{1/k} \sim a \left( \frac{e}{\alpha k + 1}\right)^\frac{\alpha+1}{k} \big(2\pi (\alpha k+1) \big)^{-1/(2k)} (1+ o(1)).
\end{align*}
Thus, we get that
\begin{align*}
\left( \frac{a^k}{\Gamma(\alpha k +1)} \right)^{1/k} \to 0 \quad \textrm{as} \quad k \to \infty.
\end{align*}
Thus, by the root criterion, we get an infinite radius of convergence.

\subsection{The fractional $\sis$ model}
\label{sec:FSISM}

In the discussion above the symbols $S(t)$ and $I(t)$ have been used denoting percenteges. Indeed, $N(t)=1$ is a constant function for any $t$. Denoting by $\snc(t)$ and $\Inc(t)$ the number of susceptibles and infectives, respectively, at time $t$, the fractional $\sis$ model with non constant population (see \cite{zhou1994JMB,zhang2017SAM} for $\alpha=1$, that is the non fractional case, we say SIS model) is written as 
\begin{equation}
\begin{split}
&\begin{cases}
\disp\dcap{\alpha}\snc(t) = \Lambda\sommanc(t)-\beta \frac{\snc(t)\Inc(t)}{\sommanc(t)}+\gamma \Inc(t) -\mu \snc(t)\smallskip\\
\disp\dcap{\alpha}\Inc(t) = \beta \frac{\snc(t)\Inc(t)}{\sommanc(t)}-\gamma \Inc(t) -\mu \Inc(t)
\end{cases}\\
&\text{with $\sommanc(t) = \snc(t)+\Inc(t)$, $\s(0)=\szeronc$ and $\I(0)=\izeronc$,}
\end{split}
\label{eq:sisCapNC} 
\end{equation}
where $\Lambda$ is the birth rate, $\mu$ is the death removal rate, $\beta$ is the contact rate and $\gamma$ is the recovery removal rate. The sum of susceptibles and infectives is defined by $\sommanc(t)$.

The problem to solve \eqref{eq:sisCapNC} is challenging for many reasons. To overcome such difficulties we introduce the difference between the susceptible and infective populations given by
\begin{equation}
\label{eq:ned}
	\diffnc(t) = \snc(t)-\Inc(t),
\end{equation}
from which we are able to recover the functions $\snc$ and $\Inc$ as follows
\begin{equation*}
	\snc(t) = \frac{\sommanc(t)+\diffnc(t)}{2} \qquad \textrm{and} \qquad \Inc(t)=\frac{\sommanc(t)-\diffnc(t)}{2}.
\end{equation*}
By the linearity of the Caputo derivative (see \ref{p2}) the problem takes the form
\begin{align}
	\dcap{\alpha}\sommanc(t) &= (\Lambda -\mu) \sommanc(t)\label{eq:ned2Ncap}\\
	\dcap{\alpha}\diffnc(t) &=  \left(\Lambda-\frac{\beta}{2}+\gamma\right)\sommanc(t)-(\gamma+\mu)\diffnc(t)\left(1-\frac{\beta}{2\sommanc(t)(\gamma+\mu)}\diffnc(t)\right).\label{eq:ned2Dcap}
\end{align}
In this new formulation we are able to solve \eqref{eq:ned2Ncap} by using standard results.
\begin{prop} \label{propMittag}
The solution to \eqref{eq:ned2Ncap} with initial datum $\nzeronc=\szeronc+\izeronc$ is 
\begin{equation}
\label{mlSolN}
	\sommanc(t) = \nzeronc E_{\alpha}((\Lambda-\mu) t^{\alpha}),
\end{equation}
where $E_{\alpha}$ is the Mittag-Leffler function, defined in \eqref{eq:mittag}.
\end{prop}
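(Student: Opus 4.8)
The plan is to recognize that \eqref{eq:ned2Ncap} is exactly the linear fractional relaxation equation $\dcap{\alpha} u + a\, u = 0$ already solved in Section \ref{sec:fracCal}, whose solution with unit initial datum is the Mittag-Leffler function \eqref{eq:mittag}. Writing $a = \mu - \Lambda$, the equation reads $\dcap{\alpha}\sommanc(t) + a\,\sommanc(t) = 0$, so it is of precisely that form, the only difference being the initial value $\sommanc(0) = \nzeronc$ in place of $1$. First I would dispose of the initial datum by rescaling: assuming $\nzeronc \neq 0$ and setting $u(t) = \sommanc(t)/\nzeronc$, the linearity of the Caputo derivative (property \ref{p2}) shows that $u$ solves $\dcap{\alpha} u + a\, u = 0$ with $u(0) = 1$, hence $u(t) = E_{\alpha}(-a t^{\alpha}) = E_{\alpha}((\Lambda - \mu) t^{\alpha})$, and multiplying back by $\nzeronc$ gives \eqref{mlSolN}. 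The degenerate case $\nzeronc = 0$ forces $\sommanc \equiv 0$, consistent with property \ref{p0} and with the formula since $E_{\alpha}(0) = 1$.

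To keep the argument self-contained I would instead verify \eqref{mlSolN} directly by substituting the series. Using $\dcap{\alpha} t^{\alpha k} = \big(\Gamma(\alpha k + 1)/\Gamma(\alpha(k-1)+1)\big)\, t^{\alpha(k-1)}$ for $k \geq 1$, together with property \ref{p0} for the constant term $k=0$, term-by-term differentiation and the reindexing $j = k-1$ yield
\begin{align*}
\dcap{\alpha}\sommanc(t) = \nzeronc \sum_{k \geq 1}(\Lambda - \mu)^{k} \frac{t^{\alpha(k-1)}}{\Gamma(\alpha(k-1)+1)} = (\Lambda - \mu)\,\sommanc(t),
\end{align*}
while $\sommanc(0) = \nzeronc$ is immediate. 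This reproduces \eqref{eq:ned2Ncap} with the prescribed initial condition.

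The only point requiring care is the justification of applying $\dcap{\alpha}$ termwise, which rests on the uniform convergence of the Mittag-Leffler series and of its formally differentiated series on compact subsets of $[0,\infty)$; this is exactly the infinite radius of convergence established in Section \ref{sec:fracCal} via Stirling's formula and the root test, so no new estimate is needed. Uniqueness, which is what licenses calling \eqref{mlSolN} \emph{the} solution, follows from the injectivity of the Laplace transform used there: the transform $\widetilde{\sommanc}(\lambda)$ is determined by the equation together with $\sommanc(0)=\nzeronc$, hence so is $\sommanc$. I do not anticipate any genuine obstacle, since the statement is a direct specialization of the relaxation-equation result already proved.
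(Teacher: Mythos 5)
Your proposal is correct and takes essentially the same route as the paper: the paper justifies Proposition \ref{propMittag} precisely by recognizing \eqref{eq:ned2Ncap} as the linear relaxation equation $\mathscr{D}^\alpha_t u + a\,u = 0$ (with $a=\mu-\Lambda$) already solved in Section \ref{sec:fracCal} via the Laplace transform, the initial datum $\nzeronc$ being absorbed by linearity exactly as in your rescaling step. Your supplementary term-by-term series verification, the treatment of the degenerate case $\nzeronc=0$, and the explicit uniqueness argument via injectivity of the Laplace transform only make rigorous what the paper leaves implicit.
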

Notice that $\sommanc(t)\geq 0$ is an increasing function as $\Lambda-\mu>0$ whereas, it exhibits a decreasing behaviour for $\Lambda-\mu<0$. Thus, we can write the non-obvious relation
\begin{align*}
\mathscr{D}^\alpha_t \sommanc(t) >0 \quad \textrm{if $ \Lambda > \mu$ and $\sommanc(t)$ is increasing},  \\
\mathscr{D}^\alpha_t \sommanc(t) <0 \quad \textrm{if $\Lambda< \mu$ and $\sommanc(t)$ is decreasing}.
\end{align*}
We underline that the fractional derivative is a non-local operator and we do not have a direct information about the behaviour of the function under investigation.

The equation \eqref{eq:ned2Dcap} can be treated as a fractional logistic equation with a forcing term. We decided to focus on this equation in a different work. Although the problem can be studied from a numerical point of view, proceeding with a general approach seems to be hard. \\

Our results can be regarded as the special case $\Lambda=\mu$, that is constant population $\sommanc(t)$, $t>0$. Indeed, for the Mittag-Leffler function we have $E_\alpha(0)=1$, $\forall\, \alpha\in (0,1)$. Thus, we turn our problem in studying the fractional logistic equation. In particular, assuming $\Lambda = \mu$ the problem reduces to 
\begin{align}
	\dcap{\alpha}\sommanc(t) &= 0\\
	\dcap{\alpha}\diffnc(t) &=  \left(\Lambda-\frac{\beta}{2}+\gamma\right)\sommanc(t)-(\gamma+\mu)\diffnc(t)\left(1-\frac{\beta}{2\sommanc(t)(\gamma+\mu)}\diffnc(t)\right)
\end{align}
that is, $\sommanc(t)$ is constant and satisfies \ref{p0} as we can see from the first equation and the second equation is the fractional logistic equation we are interested in with the suitable characterization of all parameters. Indeed, by considering $\mathcal{N}(t)=C$ with the corresponding compartmental  $\Lambda C$, $\beta C$, $\mu C$, $\gamma C$, the equations above take the form
\begin{equation}
\begin{split}
&\begin{cases}
C \disp\dcap{\alpha} \frac{\snc(t)}{C} = \Lambda C -\beta C \frac{\snc(t)}{C} \frac{\Inc(t)}{C} + \gamma C \frac{\Inc(t)}{C} -\mu  C \frac{ \snc(t)}{C}\smallskip\\
C \disp\dcap{\alpha} \frac{\Inc(t)}{C} = \beta C \frac{\snc(t)}{C}\frac{\Inc(t)}{C}-\gamma C \frac{\Inc(t)}{C} -\mu C \frac{\Inc(t)}{C}
\end{cases}\\
&\text{with $C = \snc(t)+\Inc(t)$, $\s(0)=\szeronc$ and $\I(0)=\izeronc$,}
\end{split}
\end{equation}
and we get
\begin{equation}
\begin{split}
&\begin{cases}
C \disp\dcap{\alpha} S(t) = \Lambda C -\beta C S(t) I(t) + \gamma C I(t) -\mu  C S(t) \smallskip\\
C \disp\dcap{\alpha} I(t) = \beta C S(t) I(t) - \gamma C I(t) -\mu C I(t)
\end{cases}\\
&\text{with $1 = S(t) + I(t)$, $S(0)=S_0$ and $I(0)=I_0$,}
\end{split}
\end{equation}
where $S_0=\szeronc /C$ and $I_0=\izeronc/C$. Remember that $I(t) = \mathcal{I}(t)/C$ is a percentage, by recalling that $\mathcal{Z}(t) = C - 2 \mathcal{I}(t)$ and $\Lambda= \mu$ we obtain 
\begin{align*}
-2 \disp\dcap{\alpha} \mathcal{I}(t) 
= &  \left( \mu + \gamma - \frac{\beta}{2} \right) C - (\gamma + \mu) (C - 2 \mathcal{I}(t)) \left( 1 - \frac{\beta}{2C (\gamma + \mu)} (C - 2 \mathcal{I}(t)) \right)\\
= & - \frac{\beta}{2} C + 2 (\gamma+ \mu) \mathcal{I}(t) + \frac{\beta}{2C} (C - 2\mathcal{I}(t))^2\\
= & 2(\gamma + \mu - \beta) \mathcal{I}(t) + 2 \frac{\beta}{C} \mathcal{I}^2(t)
\end{align*}
that is
\begin{align*}
- 2 C \disp\dcap{\alpha} I(t) = - 2 (\beta - (\gamma+\mu)) C I(t) + 2 \beta C I^2(t)
\end{align*}
from which we recover
\begin{align*}
\disp\dcap{\alpha} \mathcal{I}(t) = \beta c I(t) - \beta I^2(t)
\end{align*}
which is \eqref{eqMainI}. We notice that in this characterization the carrying capacity $c$ merits further investigations. Indeed, it must be $c \neq 1$. We are lead to study both cases $c=0$ and $c\neq 0$.  Since, in our formulation, $\sommanc(t)=1$ we refer to $\snc(t)$ and $\Inc(t)$ as percentages and use the symbol $S(t)$ and $I(t)$.

For $\alpha=1$ the Mittag-Leffler becomes the exponential $E_1((\Lambda - \mu)t)=e^{(\Lambda-\mu) t}$ whereas, for $\alpha \in (0,1)$ we have the following asymptotic behaviours for $\Lambda \leq \mu$,
\begin{align*}
\frac{E_\alpha((\Lambda-\mu)t^\alpha)}{e_0((\Lambda-\mu)t^\alpha)} \to 1, \quad \textrm{as} \quad t\to 0 \quad \textrm{and} \quad  \frac{E_\alpha((\Lambda-\mu)t^\alpha)}{e_\infty((\Lambda-\mu)t^\alpha)} \to 1, \quad \textrm{as} \quad  t\to \infty
\end{align*} 
where
\begin{align*}
e_0((\Lambda-\mu)t^\alpha) = \exp\left( - |\Lambda-\mu| \frac{
t^\alpha}{\Gamma(1+\alpha)} \right), \quad \textrm{and} \quad e_\infty((\Lambda-\mu)t^\alpha) = \frac{1}{|\Lambda-\mu|} \frac{ t^{-\alpha}}{\Gamma(1-\alpha)}.
\end{align*}
For $\Lambda>\mu$, the Mittag-Leffler \eqref{mlSolN} is an increasing function.

\section{Proof of the main results}
\label{sec:proof}

In this section we collect the proof of the results presented in the work.  

From the theory of power series we know that to each series representation with coefficients $\{\psi_k\}_k$ corresponds a radius of convergence $r_\alpha \in [0, \infty]$ such that the series converges uniformly in $(0, r)$ for every $r< r_\alpha$. By the root test we also have that
\begin{align}
r_\alpha = \left( \lim_{k \to \infty} \sup \bigg| \frac{\psi_k}{\Gamma(\alpha k + 1)} \bigg|^{1/k} \right)^{-1/\alpha}
\end{align}
and the radius $r_\alpha$ obviously depends on the sequence $\{\psi_k\}_k$ and the order $\alpha \in (0,1)$ of the fractional derivative.

\begin{proof}[Proof of Theorem \ref{thm:solFrac}]
Similarly to the classical case, by the linearity \ref{p2} of the Caputo derivative, we exploit $\s(t) = 1-\I(t)$ to reduce problem \eqref{eq:sisCap} to
\begin{equation}\label{eq:iCap}
\dcap{\alpha}\I(t) = \beta c\I(t) \left(1-\frac{\I(t)}{c}\right).
\end{equation}
We rewrite \eqref{eq:iCap} as 
\begin{equation}\label{eq:fracLog}
\dcap{\alpha} v(t) = \frac{1}{M^{\alpha}}v(t)(1-v(t)),
\end{equation}
where $v(t)=\I(t)/c$ and $M=(\beta c)^{-1/\alpha} = \molt^{-1/\alpha}$.
Equation \eqref{eq:fracLog} is the fractional logistic equation investigated in \cite{dovidio2018PA} where the explicit solution is given for $M>1$ and $v(0)=1/2$ as
\begin{equation}\label{eq:solLog}
v(t) = \sum_{k\geq0}\frac{\ek{k}}{M^{\alpha k}}\frac{t^{\alpha k}}{\Gamma(\alpha k+1)}.
\end{equation}
In particular, the authors proved an estimate by below of the convergence ray $r_\alpha$. From \eqref{eq:solLog} we recover $\I(t) = c v(t)$, solution of the $\alpha-\sis$ model.
\end{proof}

\begin{proof}[Proof of Theorem \ref{thm:solFrac2}]
By the linearity \ref{p2} of the Caputo derivative and the fact that $\s(t) = 1-\I(t)$ the problem \eqref{eq:sisCap} reduces to
\begin{equation}\label{eq:iCap2}
\dcap{\alpha}\I(t) = -\beta\I^{2}(t).
\end{equation}
Setting $u(t)=\beta\I(t)$ we have that 
\begin{equation}\label{eq:fracLog2}
\dcap{\alpha} u(t) = \beta\dcap{\alpha}\I(t)=-\beta^{2}\I^2(t) = -u^{2}(t).
\end{equation}
We prove that 
\begin{equation}\label{eq:solSig1}
	u(t) = \sum_{k=0}^{\infty}\ak{k}\frac{t^{\alpha k}}{\Gamma(\alpha k+1)}
\end{equation}
solves \eqref{eq:fracLog2}, hence $\I(t)=u(t)/\beta$ is the solution to \eqref{eq:iCap2}.

To this end we compute the Riemann-Liouville fractional derivative of $u(t)$ in \eqref{eq:solSig1} which is 
\begin{align*}
	\drl{\alpha}u(t) &= \sum_{k=0}^{\infty}\ak{k}\frac{t^{\alpha k-\alpha}}{\Gamma(\alpha k-\alpha+1)}\\
	& = \ak{0}\frac{t^{-\alpha}}{\Gamma(1-\alpha)}+\sum_{k=0}^{\infty}\ak{k+1}\frac{t^{\alpha k}}{\Gamma(\alpha k+1)}\\
	& = \ak{0}\frac{t^{-\alpha}}{\Gamma(1-\alpha)}+\ak{1}+\ak{2}\frac{t^{\alpha}}{\Gamma(\alpha+1)}+\ak{3}\frac{t^{2\alpha}}{\Gamma(2\alpha+1)}+\ak{4}\frac{t^{3\alpha}}{\Gamma(3\alpha+1)}+\ak{5}\frac{t^{4\alpha}}{\Gamma(4\alpha+1)}+\dots.
\end{align*}
By \eqref{eq:caprl}, we have 
\begin{equation}\label{eq:coeff1}
	\dcap{\alpha}u(t) = \ak{1}+\ak{2}\frac{t^{\alpha}}{\Gamma(\alpha+1)}+\ak{3}\frac{t^{2\alpha}}{\Gamma(2\alpha+1)}+\ak{4}\frac{t^{3\alpha}}{\Gamma(3\alpha+1)}+\ak{5}\frac{t^{4\alpha}}{\Gamma(4\alpha+1)}+\dots.
\end{equation}
Now we compute $u^{2}(t)$
\begin{equation}\label{eq:coeff2}
\begin{split}
	u^{2}(t) &= \sum_{k=0}^{\infty}\sum_{s=0}^{\infty}\ak{k}\ak{s}\frac{t^{\alpha (k+s)}}{\Gamma(\alpha k+1)\Gamma(\alpha s+1)}\\
	& = \ak{0}\ak{0}\\
	&+ \frac{2\ak{1}\ak{0}}{\Gamma(\alpha+1)}t^{\alpha}\\
	&+ \left(\frac{\ak{1}\ak{1}}{\Gamma(\alpha+1)\Gamma(\alpha+1)}+\frac{2\ak{0}\ak{2}}{\Gamma(2\alpha+1)}\right)t^{2\alpha}\\
	&+ \left(\frac{\ak{1}\ak{2}}{\Gamma(\alpha+1)\Gamma(2\alpha+1)}+\frac{2\ak{0}\ak{3}}{\Gamma(3\alpha+1)}\right)t^{3\alpha}\\
	&+ \left(\frac{\ak{2}\ak{2}}{\Gamma(2\alpha+1)\Gamma(2\alpha+1)}+\frac{2\ak{1}\ak{3}}{\Gamma(\alpha+1)\Gamma(3\alpha+1)}+\frac{2\ak{0}\ak{4}}{\Gamma(4\alpha+1)}\right)t^{4\alpha}+\dots
\end{split}
\end{equation}
By \eqref{eq:coeff1} and \eqref{eq:coeff2} and by $\ak{0}=1/2$ we have
\begin{align*}
	\ak{1} & = -\ak{0}\ak{0} = -1/4\\
	\ak{2} & = -2\ak{1}\ak{0}\frac{\Gamma(\alpha+1)}{\Gamma(\alpha+1)}\\
	\ak{3} & = \ak{1}\ak{1}\frac{\Gamma(2\alpha+1)}{\Gamma(\alpha+1)\Gamma(\alpha+1)}+2\ak{0}\ak{2}\frac{\Gamma(2\alpha+1)}{\Gamma(2\alpha+1)}\\
	\ak{4} & = \ak{1}\ak{2}\frac{\Gamma(3\alpha+1)}{\Gamma(\alpha+1)\Gamma(2\alpha+1)}+2\ak{0}\ak{3}\frac{\Gamma(3\alpha+1)}{\Gamma(3\alpha+1)}\\
	\ak{5}& = \ak{2}\ak{2}\frac{\Gamma(4\alpha+1)}{\Gamma(2\alpha+1)\Gamma(2\alpha+1)}+2\ak{1}\ak{3}\frac{\Gamma(4\alpha+1)}{\Gamma(\alpha+1)\Gamma(3\alpha+1)}+2\ak{0}\ak{4}\frac{\Gamma(4\alpha+1)}{\Gamma(4\alpha+1)},
\end{align*}
and thus 
\begin{equation}
	\ak{k+1} = -\sum_{j=0}^{k}\frac{\Gamma(k\alpha+1)}{\Gamma((k-j)\alpha+1)\Gamma(j\alpha+1)}\ak{j}\ak{k-j}.
\end{equation}
We use the fact that $\forall\, k \in \{0,1,\ldots, \}$,
\begin{align*}
\frac{\Gamma(k\alpha + 1)}{\Gamma((k-j)\alpha+1)\,\Gamma(j\alpha+1)} =:R_k \leq \Gamma(k\alpha + 1).
\end{align*}
From the definition above of the coefficients $\{A^\alpha_k\}_k$ we get 
\begin{align*}
\bigg|\frac{A^\alpha_{k+1}}{\Gamma((k+1)\alpha+1)} \bigg|
\leq &\, \frac{1}{\Gamma((k+1)\alpha + 1)}\sum_{j=0}^k R_j \big|A^\alpha_j \, A^\alpha_{k-j} \big|\\
\leq & \, \frac{\Gamma(k\alpha + 1)}{\Gamma((k+1)\alpha + 1)}\sum_{j=0}^k  \big|A^\alpha_j \, A^\alpha_{k-j} \big|.
\end{align*}
By iteration we obtain that $A^\alpha_k \sim |A^\alpha_0|^k$. Since $(0,1) \ni A^\alpha_0 \leq 1/A^\alpha_0$ we write
\begin{align*}
\bigg|\frac{A^\alpha_{k+1}}{\Gamma((k+1)\alpha+1)} \bigg|
\leq & \,\frac{\Gamma(k\alpha + 1)}{\Gamma((k+1)\alpha + 1)} (k+1) \left( \frac{1}{A^\alpha_0}\right)^k =: \vartheta_{k}, \quad k \in \mathbb{N}_0.
\end{align*}
We now consider the fact that
\begin{align*}
\frac{x^{x-\gamma}}{e^{x-1}} < \Gamma(x) < \frac{x^{x-1/2}}{e^{x-1}}, \quad x>1
\end{align*}
(where $\gamma \approx 0.5$ is the Mascheroni constant) and we get 
\begin{align*}
\sqrt[k]{ |\vartheta_k| } 
\sim \frac{1}{|A^\alpha_0|}\, \left( (k+1)\frac{(k\alpha+1)^{k\alpha + 1/2}}{((k+1)\alpha+1)^{(k+1)\alpha +1 -\gamma}} \right)^{1/k} .
\end{align*}
Since
\begin{align*}
(k\alpha +1)^{\frac{1}{k}(k\alpha +1/2)} \sim & \exp \left( \big( \alpha  + \frac{1}{2k} \big) \ln (k\alpha + 1 ) \right)
\end{align*}
and
\begin{align*}
((k+1)\alpha +1)^{\frac{1}{k}((k+1)\alpha +1-\gamma)} \sim \exp\left( \big( \alpha + \frac{1-\gamma}{k} \big) \ln ((k+1)\alpha + 1) \right)
\end{align*}
we get that
\begin{align*}
\sqrt[k]{ |\vartheta_k| } 
\sim \frac{1}{|A^\alpha_0|}.
\end{align*}
Thus, we get the radius of convergence
\begin{align*}
r^\vartheta_\alpha = \left( \lim_{k \to \infty} \big| \vartheta_{k} \big|^{1/k} \right)^{-1/\alpha} = \left( |A^\alpha_0| \right)^{1/\alpha}
\end{align*} 
for the series 
\begin{align*}
\sum_{k \geq 0} \vartheta_k.
\end{align*}
The convergence of the majorant series determines the uniform convergence in $(0, r_\alpha) \subset (0, r^\vartheta_\alpha)$ of the series we are interested in. This concludes the proof by considering $I=u/\beta$.
\end{proof}

\begin{remark}
The solution in Theorem \ref{thm:solFrac} has been given only for the initial datum $c/2$. This is because of the representation given in \cite{dovidio2018PA} in terms of Euler polynomials. Taking $A^\alpha_0 \in (0,1)$ we see that, setting 
\begin{align*}
v(t) = u(t/2^q) = \sum_{n \geq 0} A^\alpha_k \frac{(t/2^q)^{n\alpha}}{\Gamma(n\alpha+1)}, \quad t \in K^q \subseteq (0, r^q_\alpha)
\end{align*}
where
\begin{align*}
q = \left\lbrace
\begin{array}{ll}
\displaystyle \frac{1}{A^\alpha_0}, & A^\alpha_0 < \frac{1}{2}\smallskip\\
\displaystyle 4 + \frac{1}{2}\left(\frac{1}{A^\alpha_0} - 4\right), & A^\alpha_0\geq \frac{1}{2}
\end{array}
\right. 
\end{align*}
we obtain $r^q_\alpha = 2^q \left(|A_0^\alpha| \right)^{1/\alpha}$. This is the solution in $(0, r^q_\alpha)$ to 
\begin{align*}
\mathscr{D}^\alpha_t v =  - \frac{1}{2^q} v^2, \quad v(0)=A^\alpha_0 \in (0,1)
\end{align*}
(see the proof of Theorem 3.1 in \cite{dovidio2018PA}). In the special case $\alpha=1$ we know that 
\begin{align*}
w(t) = \left( \frac{1}{A_0} - t\right)^{-1}  = A_0 \sum_{k \geq 0} (-A_0)^k t^k \quad t \in (0, 1/A_0) 
\end{align*}  
solves $w^\prime = - w^2$ with $w(0)=A_0 \in (0,1)$. In particular, for $A^\alpha_0=A_0=1/2$ we obtain convergence in any compact sets $K \subset (0, 2)$ for both solutions $v$ and $w$.  
This underlines the fact that introducing non-locality we may deal with solutions quite far from their non-linear analogues.
\end{remark}

\section{Numerical comparison}\label{sec:comparison}
In this section we proceed with the validation of the previous results on the fractional $\sis$ model by means of numerical approximations, and we analyse the effects of fractional derivatives by comparing the ordinary and fractional $\sis$ model.

\subsection{Numerical approximation}\label{sec:Num}
The explicit solution \eqref{eq:solIfle}-\eqref{eq:solSfle} to the fractional $\sis$ model \eqref{eq:sisCap} for $c\neq0$ is defined for $b^{1/\alpha}<1$ and initial datum $\izero=c/2$. The explicit solution \eqref{eq:solIfle2}-\eqref{eq:solSfle2} to the fractional $\sis$ model \eqref{eq:sisCap} for $c=0$ is defined for the initial datum $\izero=1/(2\beta)$. In order to compute the solution to the fractional $\sis$ model for any set of parameters and any initial datum we propose and compare two numerical schemes to approximate \eqref{eq:sisCap}.
To this end, let us consider the following problem
\begin{equation}\label{eq:sisVett}
\dcap{\alpha}u(t) = f(u(t))\\
\end{equation}
on a time interval $[0, T]$ uniformly divided into $\nt+1$ time steps of length $\dt$. Our aim is to define the discrete solution $u_{n} = u(t_{n})$ for $n=1,\dots,N$, where $t_{n}=n\dt$ and $u_{0}$ is known.

\medskip
We refer to the following method as the Method 1. Following \cite{ahmed2005IJMP}, we observe that 
\begin{align*}
\rli{1-\alpha} u^{\prime}&=f(u)\\
\rli\alpha\rli{1-\alpha} u^{\prime}&=\rli\alpha f(u)\\
\rli{1}u^{\prime}&=\rli\alpha f(u),
\end{align*}
and thus we rewrite \eqref{eq:sisVett} as
\begin{equation}\label{eq:SISschema}
u(t) = u(0)+\rli\alpha f(u).
\end{equation}
We introduce a Predictor-Evaluate-Corrector-Predictor (PECE) method \cite{diethelm1999FWR}. Specifically, we use the implicit one-step Adams-Moulton method \cite[Chapter 11]{quarteroni2007TAM}, i.e.
\begin{equation}\label{saka}
u_{n+1}=u_{0}+\frac{1}{\Gamma(\alpha)}\left(\sum_{j=0}^{n}a_{j,n+1}f(u_{j})+a_{n+1,n+1}f(\widetilde u_{n+1})\right),
\end{equation}
where the coefficients $a_{j,n+1}$ and $\widetilde u_{n+1}$ are defined below. \\

First of all, we compute the term $\widetilde u_{n+1}$ with the one-step Adams-Bashforth method. We introduce $g(s)=f(u(s))$ and $g_{n+1}$ as a piecewise linear function which interpolates $g$ on the nodes $t_{j}$, $j = 0,\dots,n+1$. We approximate the integral term of \eqref{eq:SISschema}  with the product rectangle rule, i.e.
\begin{equation*}
\int_{\tzero}^{\tn}(\tn-s)^{\alpha-1}g(s)ds \approx \sum_{j=0}^{n}b_{j,n+1}g(t_{j}),
\end{equation*}
where 
\begin{equation*}
b_{j,n+1}=\int_{t_{j}}^{t_{j+1}}(\tn-s)^{\alpha-1}ds=\frac{1}{\alpha}((\tn-t_{j})^{\alpha}-(\tn-t_{j+1})^{\alpha}).
\end{equation*}
In particular, for our uniform discretization of the time interval $[0,T]$, we have
\begin{equation*}
b_{j,n+1}=\frac{\dt^{\alpha}}{\alpha}((n+1-j)^{\alpha}-(n-j)^{\alpha}).
\end{equation*}
Therefore, 
\begin{equation}
\widetilde u_{n+1} = u_{0}+\frac{1}{\Gamma(\alpha)}\sum_{j=0}^{n}b_{j,n+1}f(u_{j}).
\end{equation}

Now we compute the coefficients $a_{j,n+1}$, thus we approximate $\rli\alpha g$ as
\begin{equation*}\label{eq:trap}
\int_{\tzero}^{\tn}(\tn-s)^{\alpha-1}g(s)ds\approx  \int_{\tzero}^{\tn}(\tn-s)^{\alpha-1}g_{n+1}(s)ds.
\end{equation*}
By using the product trapezoidal quadrature formula on the nodes $t_{j}$, equation \eqref{eq:trap} becomes
\begin{equation*}
\int_{\tzero}^{\tn}(\tn-s)^{\alpha-1}g_{n+1}(s)ds = \sum_{j=0}^{n+1}a_{j,n+1}g(t_{j}),
\end{equation*}
where $a_{j,n+1}$ are defined as
\begin{equation*}
a_{j,n+1} = \int_{t_{j-1}}^{t_{j}}\frac{s-t_{j-1}}{t_{j}-t_{j-1}}(\tn-s)^{\alpha-1}ds+\int_{t_{j}}^{t_{j+1}}\frac{t_{j+1}-s}{t_{j+1}-t_{j}}(\tn-s)^{\alpha-1}ds.
\end{equation*}
We observe that, from integration by parts, we have
\begin{align*}\label{eq:jm1j}
\int_{t_{j-1}}^{t_{j}}\frac{s-t_{j-1}}{t_{j}-t_{j-1}}(\tn-s)^{\alpha-1}ds  &= -\frac{(\tn-t_{j})^{\alpha}}{\alpha}+ \int_{t_{j-1}}^{t_{j}}\frac{(\tn-s)^{\alpha}}{\alpha(t_{j}-t_{j-1})}ds\\
\int_{t_{j}}^{t_{j+1}}\frac{t_{j+1}-s}{t_{j+1}-t_{j}}(\tn-s)^{\alpha-1}ds & = \frac{(\tn-t_{j})^{\alpha}}{\alpha}- \int_{t_{j}}^{t_{j+1}}\frac{(\tn-s)^{\alpha}}{\alpha(t_{j+1}-t_{j})}ds,
\end{align*}
and therefore
\begin{align*}
a_{0,n+1} &= \frac{(\tn-\tzero)^{\alpha}}{\alpha}- \int_{t_{0}}^{t_{1}}\frac{(\tn-s)^{\alpha}}{\alpha(t_{1}-t_{0})}ds\\
a_{n+1,n+1} &= \int_{t_{n}}^{t_{n+1}}\frac{(t_{n+1}-s)^{\alpha}}{\alpha(t_{n+1}-t_{n})}ds\\
a_{j,n+1} & = \int_{t_{j-1}}^{t_{j}}\frac{(\tn-s)^{\alpha}}{\alpha(t_{j}-t_{j-1})}ds -
\int_{t_{j}}^{t_{j+1}}\frac{(\tn-s)^{\alpha}}{\alpha(t_{j+1}-t_{j})}ds \qquad \text{for $j=1,\dots,n$}.
\end{align*}

Finally, in our uniform grid, the coefficients are
\begin{align}
a_{0,n+1} &= \frac{\dt^{\alpha}}{\alpha(\alpha+1)}(n^{\alpha+1}-(n-\alpha)(n+1)^{\alpha})\\
a_{n+1,n+1} &= \frac{\dt^{\alpha}}{\alpha(\alpha+1)}\\
a_{j,n+1} & = \frac{\dt^{\alpha}}{\alpha(\alpha+1)}((n-j+2)^{\alpha+1}-2(n-j+1)^{\alpha+1}+(n-j)^{\alpha+1}) \qquad \text{for $j=1,\dots,n$}.
\end{align}
\begin{remark}
The numerical scheme described above works for any $\alpha\in[0,1]$.
\end{remark}


We now introduce a method to which we refer as Method 2. Let $\alpha\in(0,1)$. In \cite{giga1902AA} the authors give the following approximation of the Caputo derivative 
\begin{equation}
	\dcap{\alpha}u_{n}=\frac{1}{\Gamma(2-\alpha)\dt^{\alpha}}\left(u_{n}-\sum_{j=0}^{n-1}C_{n,j}u_{j} \right),
\end{equation}
with 
\begin{align*}
	C_{n,0} = g(n),\qquad C_{n,j} = g(n-j)-g(n-(j-1)) \quad\text{ for $j=1,\dots,n-1$}
\end{align*}
and $g(r) = r^{1-\alpha}-(r-1)^{1-\alpha} \text{ for $r\geq1$}$. The numerical scheme to solve \eqref{eq:sisVett} is then given by
\begin{equation}\label{giga}
	u_{n+1} = \sum_{j=0}^{n-1}C_{n,j}u_{j}+\Gamma(2-\alpha)\dt^{\alpha}f(u_{n}).
\end{equation} 
We refer to \cite{giga1902AA} for further details on the properties of the scheme.
\begin{remark}\label{rem:valoriEstremi}
The numerical scheme above described works for $\alpha\in(0,1)$, with the extreme values excluded.
\end{remark}

\medskip
To summarize, in this section we have introduced two numerical schemes which we denote here by $\numuno$ and $\numdue$ for notational convenience. The solution to the fractional $\sis$ model \eqref{eq:sisCap} with the first numerical scheme (that is Method 1) is
\begin{align}
	I(t_{n+1})&=\numuno(I(t_{n}))\label{eq:saka1}\\
	S(t_{n+1})&=1-I(t_{n+1}),\label{eq:saka2}
\end{align}
where $\numuno$ is defined in \eqref{saka}, and the solution with the second numerical scheme (that is Method 2) is 
\begin{align}
	I(t_{n+1})&=\numdue(I(t_{n}))\label{eq:giga1}\\
	S(t_{n+1})&=1-I(t_{n+1}),\label{eq:giga2}
\end{align}
where $\numdue$ is defined in \eqref{giga} and $n=1,\dots,\nt$. Note that the function $f(u)$ in \eqref{eq:sisVett}, used for both the numerical schemes, is defined as $f(u)=\beta c u - \beta u^2$, while $u_{0} = \izero$.

\subsection{Numerical tests}\label{sec:test}
In this section we compare the solutions to the fractional $\sis$ model \eqref{eq:sisCap} computed with the explicit representation and the two numerical schemes, testing both the case $c\neq0$ and $c=0$. In what follows, we denote by 
\begin{itemize}
	\item $\I^{C},\,\s^{C}$ the solutions to the $\sis$ model, our aim is to show the correspondence with the case $\alpha=1$,
	\item $\I^{F},\,\s^{F}$ the solutions \eqref{eq:solIfle}-\eqref{eq:solSfle} or \eqref{eq:solIfle2}-\eqref{eq:solSfle2} to the fractional $\sis$ model \eqref{eq:sisCap} defined by Theorems \ref{thm:solFrac} or \ref{thm:solFrac2} respectively (depending on the carrying capacity $c$),
	\item $\I_{1}^{N},\,\s_{1}^{N}$ the numerical solutions \eqref{eq:saka1}-\eqref{eq:saka2} computed with the methodology proposed as Method 1,
	\item $\I_{2}^{N},\,\s_{2}^{N}$ the numerical solutions \eqref{eq:giga1}-\eqref{eq:giga2} computed with the methodology proposed as Method 2.
\end{itemize}

\subsubsection{Test with $c\neq0$} 
We start our numerical analysis with the case of carrying capacity $c\neq0$. We fix this set of parameters: $\beta = 0.7$, $\gamma = 0.05$, $\mu=0.12$, $\sigma=4$ and $c = 0.75$. The initial data are $\I(0)=c/2$ and $\s(0)=1-\I(0)$, the final time is $T=5$ and the time step $\dt = 0.05$.

First of all we compare the exact fractional solutions \eqref{eq:solIfle}-\eqref{eq:solSfle} and the two numerical solutions \eqref{eq:saka1}-\eqref{eq:saka2} and \eqref{eq:giga1}-\eqref{eq:giga2} for $\alpha=0.99$, which approximately corresponds to the classical derivative. Note that we do not use $\alpha\equiv1$ since the second numerical scheme works for $\alpha\in(0,1)$, as already observed in Remark \ref{rem:valoriEstremi}. In Figure \ref{fig:alpha1} we show the results. As expected, the exact fractional solution and the two numerical solutions to \eqref{eq:sisCap} overlap the solution for $\alpha=1$.  
\begin{figure}[h!]
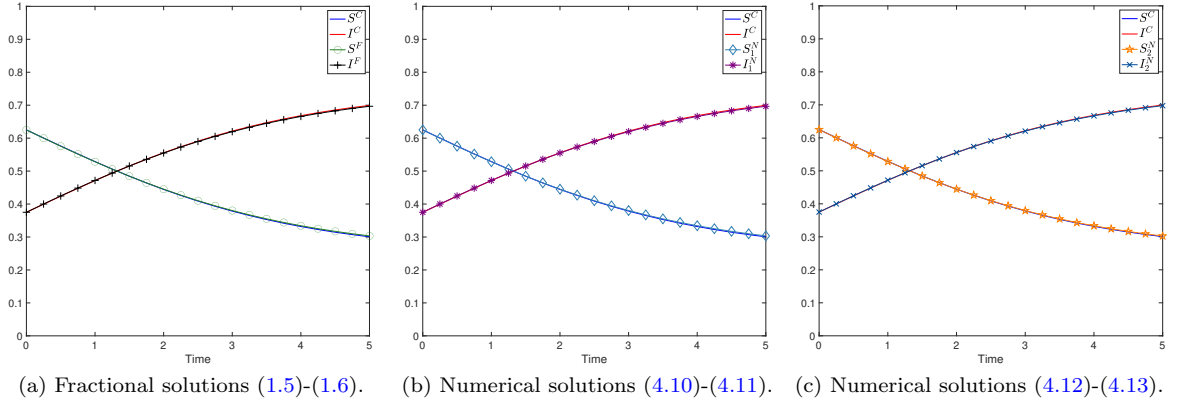

\centering
\subfloat[][Fractional solutions \eqref{eq:solIfle}-\eqref{eq:solSfle}.]{
\includegraphics[width=0.31\columnwidth]{grafici/logisticA10.pdf}
}\,
\subfloat[][Numerical solutions \eqref{eq:saka1}-\eqref{eq:saka2}.]{
\includegraphics[width=0.31\columnwidth]{grafici/numElSakaA10.pdf}
}\,
\subfloat[][Numerical solutions \eqref{eq:giga1}-\eqref{eq:giga2}.]{
\includegraphics[width=0.31\columnwidth]{grafici/numGigaA10.pdf}}
\caption{Comparison between the solutions to the SIS model and the explicit and numerical fractional solutions to \eqref{eq:sisCap} with $\alpha=0.99$. The analysis shows correspondence between SIS model and the case $\alpha=1$ of our model. This result was expected and it confirms the continuity wit respect to $\alpha$ (see \ref{p3}). }
\label{fig:alpha1}
\end{figure}

In Figures \ref{fig:alpha07} and \ref{fig:alpha03} we show the results obtained with $\alpha=0.7$ and $\alpha =0.3$. In the first case the two density curves are closer each other and the intersection point between them slightly moves to the right with respect to the solution shown in Figure \ref{fig:alpha1}. Such behavior is further emphasized by lower values of $\alpha$, as shown for example in Figure \ref{fig:alpha03}. Note that, in both cases the three methodologies produces almost identical results.

\begin{figure}[h!]
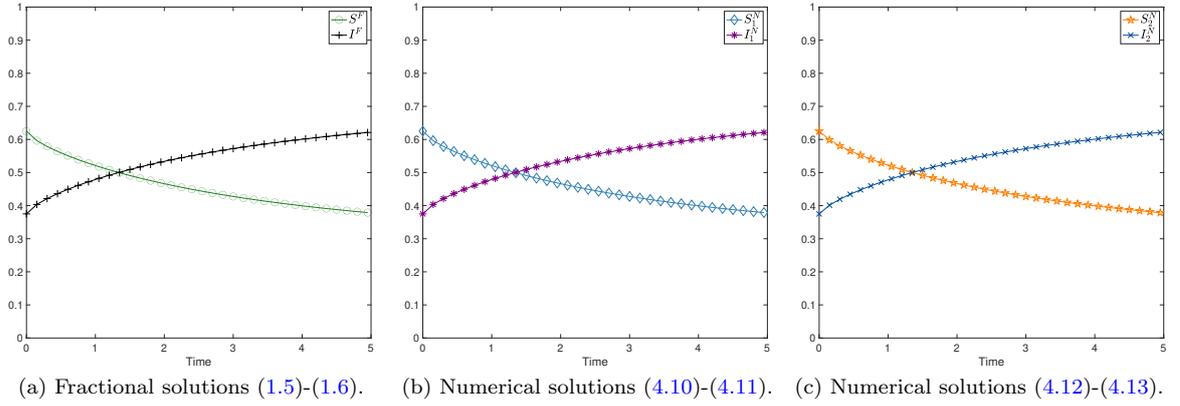

\centering
\subfloat[][Fractional solutions \eqref{eq:solIfle}-\eqref{eq:solSfle}.]{
\includegraphics[width=0.31\columnwidth]{grafici/logisticA7.pdf}
}\,
\subfloat[][Numerical solutions \eqref{eq:saka1}-\eqref{eq:saka2}.]{
\includegraphics[width=0.31\columnwidth]{grafici/numElSakaA7.pdf}
}\,
\subfloat[][Numerical solutions  \eqref{eq:giga1}-\eqref{eq:giga2}.]{
\includegraphics[width=0.31\columnwidth]{grafici/numGigaA7.pdf}}
\caption{Comparison between the explicit and numerical fractional solutions to \eqref{eq:sisCap} with $\alpha=0.7$.}
\label{fig:alpha07}
\end{figure}

\begin{figure}[h!]
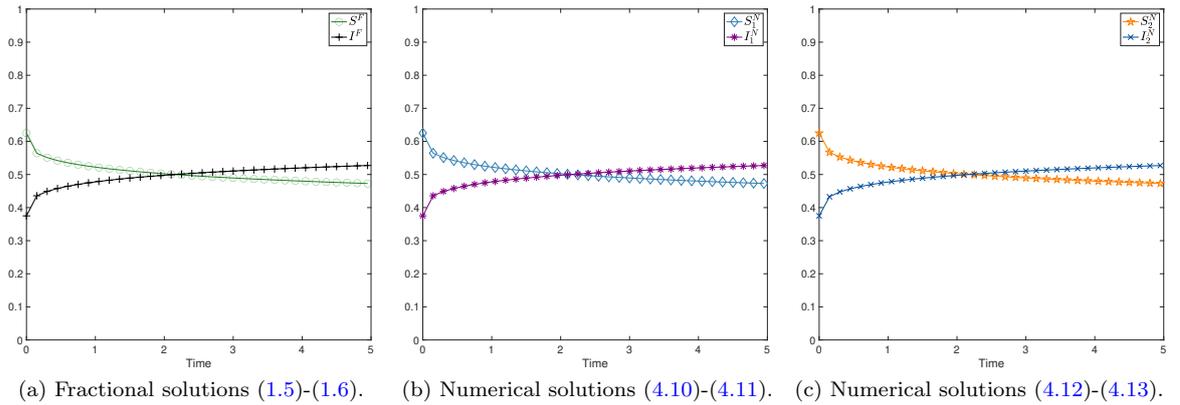

\centering
\subfloat[][Fractional solutions \eqref{eq:solIfle}-\eqref{eq:solSfle}.]{
\includegraphics[width=0.31\columnwidth]{grafici/logisticA3.pdf}
}\,
\subfloat[][Numerical solutions \eqref{eq:saka1}-\eqref{eq:saka2}.]{
\includegraphics[width=0.31\columnwidth]{grafici/numElSakaA3.pdf}
}\,
\subfloat[][Numerical solutions  \eqref{eq:giga1}-\eqref{eq:giga2}.]{
\includegraphics[width=0.31\columnwidth]{grafici/numGigaA3.pdf}}
\caption{Comparison between the explicit and numerical fractional solutions to \eqref{eq:sisCap} with $\alpha=0.3$.}
\label{fig:alpha03}
\end{figure}

To further investigate on the three methodologies, we compute the $L^{\infty}$-norm of the difference between the exact fractional solutions \eqref{eq:solIfle}-\eqref{eq:solSfle} and the two numerical solutions \eqref{eq:saka1}-\eqref{eq:saka2} and \eqref{eq:giga1}-\eqref{eq:giga2} and between the two numerical solutions each others, as shown in Table \ref{tab:errore}. We observe that the errors range from orders of $10^{-5}$ to $10^{-3}$, increasing with respect to the decrease of $\alpha$. This fact further certifies the similarity between the three proposed methodologies.

\begin{table}[h!]
\renewcommand{\arraystretch}{1.3}
\centering
\begin{tabular}{|c|c|c|c|}
\hline
$\alpha$ & $\norm{\I^{F}-\I^{N}_{1}}_{\infty}$ & $\norm{\I^{F}-\I^{N}_{2}}_{\infty}$ & $\norm{\I^{N}_{1}-\I^{N}_{2}}_{\infty}$ \\\hline
0.99 & 1e--05 & 9e--04 & 9e--04\\\hline
0.7 & 1e--05 & 2e--03 & 2e--04\\\hline
0.3 & 3e--05 & 8e--03 & 8e--03\\\hline
\end{tabular}
\caption{Comparison of the $L^{\infty}$-norm between the solutions computed with the three methodologies for different values of $\alpha$.}
\label{tab:errore}
\end{table}


\subsubsection{Test with $c=0$} 
We focus now on the case of carrying capacity $c=0$. We fix this set of parameters: $\beta = 0.7$, $\gamma = 0.07$, $\mu=0.63$, $\sigma=1$ and $c = 0$. Moreover, the initial data are $\I(0)=1/(2\beta)$ and $\s(0)=1-\I(0)$, the final time is $T=1$ and the time step $\dt = 0.01$.

In Figure \ref{fig:alpha1_2} we compare the exact fractional solutions \eqref{eq:solIfle2}-\eqref{eq:solSfle2} and the two numerical solutions \eqref{eq:saka1}-\eqref{eq:saka2} and \eqref{eq:giga1}-\eqref{eq:giga2} for $\alpha=0.99$. Again, we observe that the fractional solutions, both explicit and numerical, perfectly overlap the solution to the $\sis$ model. In Figure \ref{fig:alpha07_2} we show the results obtained with $\alpha=0.7$. Analogously to the example with $c\neq0$, the point of intersection between the two densities of population slightly moves to the right with respect to the solution shown in Figure \ref{fig:alpha1_2}. Moreover, the three different methodologies produce again almost identical results. Finally, in Figure \ref{fig:alpha05_2} we show the results obtained with $\alpha=0.5$. In this case, the explicit fractional solutions \eqref{eq:solIfle2}-\eqref{eq:solSfle2} blow up in finite time, since the final time $T$ is greater than the radius of convergence, while the two numerical solutions show that the intersection point between the two curves further moves to the right with respect to Figure \ref{fig:alpha07_2}.

\begin{figure}[h!]
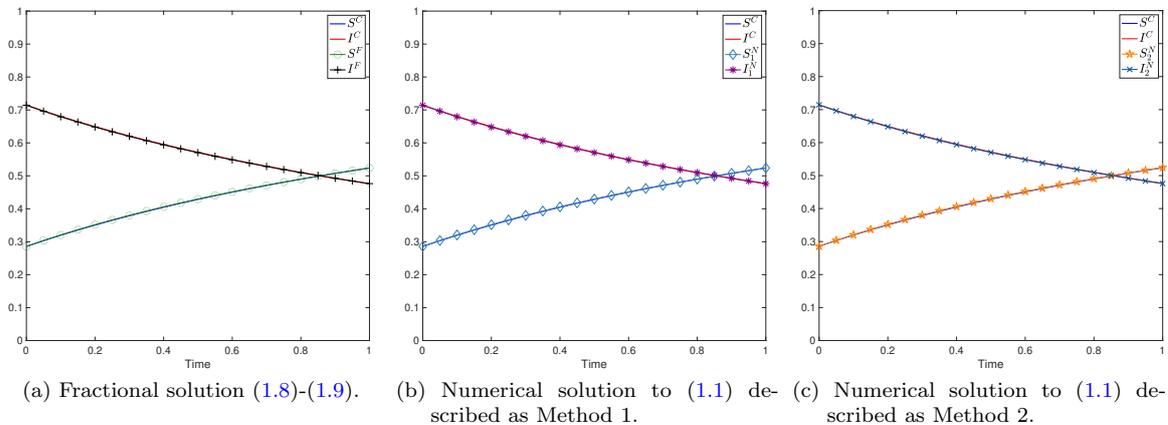

\centering
\subfloat[][Fractional solution \eqref{eq:solIfle2}-\eqref{eq:solSfle2}.]{
\includegraphics[width=0.31\columnwidth]{grafici/logisticA10_c0.pdf}
}\,
\subfloat[][Numerical solution to \eqref{eq:sisCap} described as Method 1.]{
\includegraphics[width=0.31\columnwidth]{grafici/numElSakaA10_c0.pdf}
}\,
\subfloat[][Numerical solution to \eqref{eq:sisCap} described as Method 2.]{
\includegraphics[width=0.31\columnwidth]{grafici/numGigaA10_c0.pdf}}
\caption{Comparison between the solutions to the SIS model and the fractional solutions to \eqref{eq:sisCap} with $\alpha=0.99$ (continuity w.r. to $\alpha$). }
\label{fig:alpha1_2}
\end{figure}

\begin{figure}[h!]
\centering
\subfloat[][Fractional solution to \eqref{eq:sisCap} \eqref{eq:solIfle2}-\eqref{eq:solSfle2}.]{
\includegraphics[width=0.31\columnwidth]{grafici/logisticA7_c0.pdf}
}\,
\subfloat[][Numerical solution to \eqref{eq:sisCap} described as Method 1.]{
\includegraphics[width=0.31\columnwidth]{grafici/numElSakaA7_c0.pdf}
}\,
\subfloat[][Numerical solution to \eqref{eq:sisCap} described as Method 2.]{
\includegraphics[width=0.31\columnwidth]{grafici/numGigaA7_c0.pdf}}
\caption{Comparison between the fractional solutions to \eqref{eq:sisCap} with $\alpha=0.7$. }
\label{fig:alpha07_2}
\end{figure}

\begin{figure}[h!]
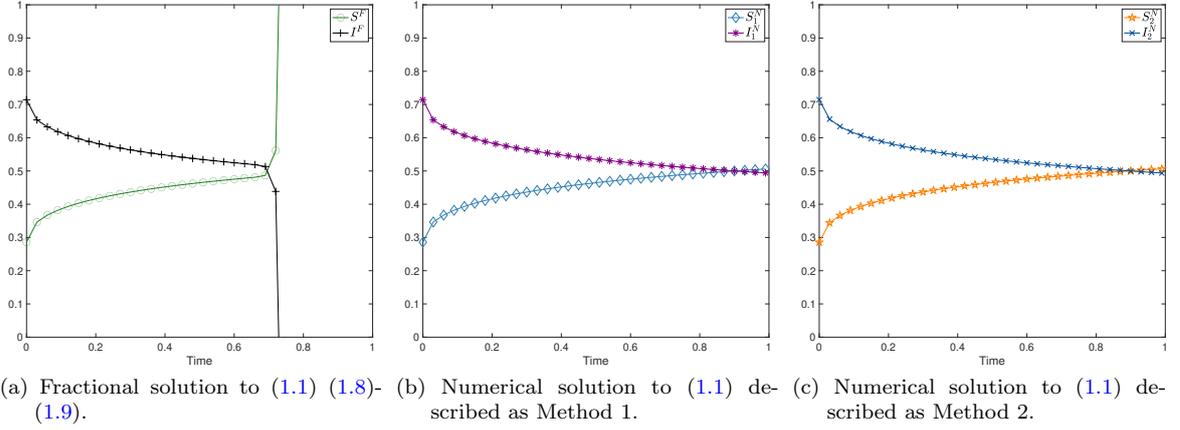

\centering
\subfloat[][Fractional solution to \eqref{eq:sisCap} \eqref{eq:solIfle2}-\eqref{eq:solSfle2}.]{
\includegraphics[width=0.31\columnwidth]{grafici/logisticA5_c0.pdf}
}\,
\subfloat[][Numerical solution to \eqref{eq:sisCap} described as Method 1.]{
\includegraphics[width=0.31\columnwidth]{grafici/numElSakaA5_c0.pdf}
}\,
\subfloat[][Numerical solution to \eqref{eq:sisCap} described as Method 2.]{
\includegraphics[width=0.31\columnwidth]{grafici/numGigaA5_c0.pdf}}
\caption{Comparison between the fractional solutions to \eqref{eq:sisCap} with $\alpha=0.5$. }
\label{fig:alpha05_2}
\end{figure}

\section{Conclusions}\label{sec:conc}
In this work we have studied the fractional $\sis$ model with constant population size. We have proposed an explicit representation of the solution to the fractional model under particular assumptions on parameters and initial data. By considering the basic reproduction number we rearrange the $\sis$ model and obtain a logistic equation. In the new formulation of the problem the carrying capacity has a new meaning based on the parameters of the $\sis$ model. We exploit such a formulation in order to study the fractional $\sis$ model and obtain a fruitful characterization of the problem, despite of many difficulties introduced by non-locality. In our formulation the carrying capacity can equal zero and this brings our attention to a different non-linear problem which in turns, it is related to the underlined $\sis$ model. We have introduced two different numerical schemes to approximate the model and perform numerical simulations, with which we have tested the proposed explicit solution. 


\bibliographystyle{siam}
\bibliography{articoli}

\end{document}